\newtheorem{theorem}{Theorem}[section]
\newtheorem{lemma}[theorem]{Lemma}
\newtheorem{proposition}[theorem]{Proposition}
\theoremstyle{definition}
\newtheorem{question}{Question}
\renewcommand{\leq}{\leqslant}
\renewcommand{\geq}{\geqslant}
\newcommand\SO{\operatorname{SO}}
\newcommand\tr{\operatorname{tr}}
\def\R{\mathbf{R}}
\def\Z{\mathbf{Z}}
\def\G{\mathbf{G}}
\def\d{\mathbf{d}}
\def\calN{\mathcal{N}}
\newcommand\counting{\operatorname{triv}}
\def\bmu{{\bm \mu}}
\def\H{\operatorname{H}}
\def\eps{\varepsilon}
\numberwithin{equation}{section}
\begin{document}

% \title[short text for running head]{full title}
\title[A conjecture of Gowers and Long]{On a conjecture of Gowers and Long}

%    Only \author and \address are required; other information is
%    optional.  Remove any unused author tags.

%    author one information
% \author[short version for running head]{name for top of paper}
\author{Ben Green}
\address{Mathematical Institute\\
Radcliffe Observatory Quarter\\
Woodstock Road\\
Oxford OX2 6GG\\
England }
\email{ben.green@maths.ox.ac.uk}

%\onehalfspace
%    \subjclass is required.
\subjclass[2000]{Primary }

\thanks{The author is supported by a Simons Investigator Grant.}
%    The 2010 edition of the Mathematics Subject Classification is
%    now available.  If you are citing a classification from the
%    new scheme, use the following input coding instead.
%\subjclass[2010]{Primary }

\begin{abstract}
We show that rounding to a $\delta$-net in $\SO(3)$ is not close to a group operation, thus confirming a conjecture of Gowers and Long.
\end{abstract}
%    Abstract is required.
\maketitle

\section{Introduction}

In a very interesting recent preprint \cite{gowers-long}, Gowers and Long considered somewhat associative binary operations. In their paper they describe a very natural example of a somewhat associative operation, and conjecture (see \cite[Conjecture 1.6]{gowers-long}) that it does not resemble a genuine group operation. Our aim in this note is to prove their conjecture. 

Let us begin by describing their example. Take $\SO(3)$ with the group operation denoted by juxtaposition and with, for definiteness, the (bi-invariant) metric $d : \SO(3) \times \SO(3) \rightarrow [0,2^{3/2}]$ given by $d(g,h) := \Vert g  - h \Vert$, where $\Vert g \Vert$ is the Frobenius (or Hilbert-Schmidt) norm $\Vert M \Vert := \sqrt{\tr(M^T M)}$. 

Throughout the paper we will take $\delta > 0$ be a small parameter, and let $X$ be a maximal $\delta$-separated subset of $\SO(3)$. We have $|X| \sim \delta^{-3}$. Define a binary operation $\circ : X \times X \rightarrow X$ by defining $x \circ y$ to be the nearest point of $X$ to $xy$ (breaking ties arbitrarily). Since $X$ is assumed to be maximal $\delta$-separated, we always have
\begin{equation}\label{proximal} d(x \circ y, xy) \leq \delta. \end{equation}

\textbf{Claim} (Gowers--Long). For a positive proportion of triples $(x_1, x_2, x_3) \in X^3$ we have the associativity relation $(x_1 \circ x_2) \circ x_3 = x_1 \circ (x_2 \circ x_3)$.\vspace{8pt}

Gowers and Long note that it seems very unlikely that any substantial portion of the multiplication table of $\circ$ can be embedded in a group operation, and make a precise conjecture, \cite[Conjecture 1.6]{gowers-long}, to this effect. The following result establishes their conjecture.

\begin{theorem}\label{mainthm}
Suppose that $\iota : X \rightarrow G$ is an injective map into a group $G$, with group operation $\cdot$. Then the number of pairs $(x_1,x_2) \in X \times X$ with $\iota(x_1) \cdot \iota(x_2) = \iota(x_1 \circ x_2)$ is at most $\eps |X|^2$, where $\eps \rightarrow 0$ as $\delta \rightarrow 0$.
\end{theorem}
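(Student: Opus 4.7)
I proceed by contradiction. Suppose, for some fixed $\eta > 0$, that for arbitrarily small $\delta$ there are at least $\eta |X|^2$ pairs $(x_1, x_2) \in X^2$ that are \emph{good}, meaning $\iota(x_1) \cdot \iota(x_2) = \iota(x_1 \circ x_2)$. The aim is to use the non-commutative structure of $\SO(3)$ to force an incompatibility between the rounded operation $\circ$ and the exact product in $G$.

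The first step is to pass to a refined subset on which the partial-homomorphism property is coherent for long products. Using a Balog--Szemer\'edi--Gowers-style pigeonhole in the non-abelian setting, I would find a subset $Y \subseteq X$ with $|Y| \geq c(\eta) |X|$ such that for a positive proportion of $k$-tuples $(y_1,\ldots, y_k) \in Y^k$ (with $k$ an integer to be chosen, possibly growing slowly as $\delta \to 0$), every pair appearing in a fixed iterated $\circ$-product $z := y_1 \circ y_2 \circ \cdots \circ y_k$ is good. For such tuples, $\iota(z) = \iota(y_1) \cdot \iota(y_2) \cdots \iota(y_k)$ in $G$, and by iterating \eqref{proximal}, $d(z, y_1 y_2 \cdots y_k) \leq C k \delta$ in $\SO(3)$.

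The second step is to manufacture a contradiction using the commutator structure of $\SO(3)$. After arranging $Y$ to contain many pairs of elements close to the identity and to be approximately closed under inversion, consider length-$4$ commutator expressions $ghg^{-1}h^{-1}$ with $d(g,\id), d(h, \id) = O(\sqrt{\delta})$. Then $ghg^{-1}h^{-1}$ is $O(\delta)$ from the identity in $\SO(3)$, so its $\circ$-analog lies in a ball around the identity containing only $O(1)$ points of $X$. By pigeonhole over many such pairs, we get many $(g,h)$ pairs sharing the same $\circ$-commutator in $X$, and hence, through the iterated partial homomorphism, the same $G$-commutator $\iota(g) \iota(h) \iota(g)^{-1} \iota(h)^{-1}$. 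But the $\SO(3)$-commutator map is a submersion on an open set near the identity, so at scale $\sqrt{\delta}$ the pairs $(g,h) \in Y \times Y$ give rise to a genuinely rich family of distinct products in $\SO(3)$, and through injectivity of $\iota$ one exhibits more $G$-valued relations than can be simultaneously satisfied; this produces the contradiction.

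The main obstacle is the first step: executing a non-abelian BSG-style iteration that produces a good set $Y$ on which $k$-fold $\circ$-products remain good with $k$ growing (however slowly) with $1/\delta$. A secondary obstacle is that the pigeonhole gain in the second step must dominate the combinatorial cost of the refinement. Finally, since $G$ is an arbitrary abstract group, all contradictions must be derived from purely algebraic consequences of the partial homomorphism combined with concrete non-abelian identities in $\SO(3)$; we cannot invoke any topology or representation theory on $G$.
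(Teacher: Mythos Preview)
Your plan has the right instincts (a BSG-type refinement, then exploit non-commutativity of $\SO(3)$), but Step 2 as written does not produce a contradiction. First, the commutator map $(g,h)\mapsto ghg^{-1}h^{-1}$ is \emph{not} a submersion near the identity: its differential at $(1,1)$ vanishes and the map is quadratic there, which is precisely why $|[g,h]|=O(\delta)$ when $|g|,|h|=O(\sqrt{\delta})$. So the ``genuinely rich family of distinct products'' you invoke does not exist at that scale. Second, and more fundamentally: even granting that many pairs $(g,h)$ share the same $G$-commutator $\iota(g)\iota(h)\iota(g)^{-1}\iota(h)^{-1}$, this is no contradiction in an arbitrary group $G$ (take $G$ abelian). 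You have no structure on $G$ to rule out such collisions, and injectivity of $\iota$ says nothing about products. There is also a density problem: only $\sim\delta^{-3/2}$ points of $X$ lie within $\sqrt{\delta}$ of $1$, a vanishing fraction of $|X|\sim\delta^{-3}$, and your Step~1 refinement gives no reason for $Y$ to retain any of them.

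The missing idea, which resolves your ``main obstacle'', is to apply Tao's metric-entropy BSG not to $X$ but to the \emph{graph} $\{(\iota(x),x):x\in X\}$ inside $G\times\SO(3)$ (discrete metric on $G$, bi-invariant metric on $\SO(3)$). A Cauchy--Schwarz step converts $\eta|X|^2$ good pairs into $\eta^2|X|^3$ quadruples with $a_1\cdot a_2=a_3\cdot a_4$ in $G$ and $d(f(a_1)f(a_2),f(a_3)f(a_4))\leq 2\delta$ in $\SO(3)$; metric BSG then yields a finite $O_\eta(1)$-approximate group $B\subset G$ and a map $\phi:B^{12}\to\SO(3)$ with $\delta$-thick image whose cocycle $\partial\phi(x,y)=\phi(y)^{-1}\phi(x)^{-1}\phi(xy)$ is $\delta$-close to a fixed finite set $S\subset\SO(3)$. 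This moves the approximateness from the domain to the range. The endgame then splits: if some $\partial\phi(x,y)$ is $\sqrt{\delta}$-far from $1$, a cocycle identity confines $\phi(B^4)$ to $O(1)$ cosets of an approximate centraliser (here a commutator estimate genuinely enters); if all cocycles are $\sqrt{\delta}$-small, one combines the Breuillard--Green--Tao structure theorem with Kazhdan's rigidity and a word-map argument to show the image is thin. Your sketch does not reach either mechanism.
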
  

We remark that we do not obtain any effective information on the speed at which $\eps \rightarrow 0$. This is because we rely on the structure theory of approximate groups \cite{bgt}, which uses ultrafilter arguments. 

\emph{Notation.} Our notation is fairly standard. Occasionally we will write, for instance, $o_{K; \delta \rightarrow 0}(1)$, which means some quantity tending to zero as $\delta \rightarrow 0$, but the rate at which this happens may depend on the parameter $K$. We write $X \gg Y$ to mean $X \geq cY$ for some absolute $c > 0$, and $X \sim Y$ means $Y \ll X \ll Y$.

Recall that $\Vert \cdot \Vert$ denotes the Hilbert-Schmidt norm and that we use this to define a distance on $\SO(3)$ by $d(g,h) := \Vert g - h\Vert$, where $\SO(3)$ is embedded in the space of $3$-by-$3$ matrices by fixing an orthonormal basis for $\R^3$. We write $|g| = d(g,1)$ for the distance to the identity (which we will always denote by $1$, the underlying group hopefully being clear from context). It may be computed that $|g| = 2^{3/2} |\sin(\theta/2)|$, where $\theta$ is the angle of the rotation $g$. Recall that the Hilbert-Schmidt norm is submultiplicative (that is, $\Vert a b \Vert \leq \Vert a \Vert \Vert b \Vert$ for all real 3-by-3 matrices). Additionally, using the conjugation invariance of trace and the fact that $g^T = g^{-1}$ for $g \in \SO(3)$, we have the $\SO(3)$-invariance $\Vert a \Vert = \Vert a g \Vert = \Vert g a \Vert$ for all 3-by-3 matrices $a$ and all $g \in \SO(3)$. In particular, $g N_{\delta}(1) = N_{\delta}(1) g$ for all $g \in \SO(3)$, where $N_{\delta}(1)$ denotes the $\delta$-neighbourhood of $1$.

\emph{Acknowledgement.} I would like to thank Emmanuel Breuillard for helpful conversations.

\section{An initial reduction}

We can fairly quickly reduce the task of proving Theorem \ref{mainthm} to that of establishing the following proposition which, since it does not involve the awkward $\circ$, is of a more conventional type. 

\begin{proposition}\label{mainprop}
Let $\eps, \delta > 0$. Let $(G, \cdot)$ be a group, and let $A$ be a finite subset of $G$ of size $n$. Let $f : A \rightarrow \SO(3)$ be a map with $\delta$-separated image, and with the property that there are at least $\eps n^3$ quadruples $(a_1, a_2, a_3, a_4) \in A^4$ with
$a_1 \cdot a_2 = a_3 \cdot a_4$ and $d(f(a_1) f(a_2) , f(a_3) f(a_4)) \leq \delta$. Then $n = o_{\eps; \delta \rightarrow 0}(\delta^{-3})$.
\end{proposition}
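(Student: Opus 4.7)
I would consider the graph $\tilde A := \{(a, f(a)) : a \in A\}$ as a subset of the product group $G \times \SO(3)$, equipped with counting measure on $G$ times Haar on $\SO(3)$. The hypothesis says precisely that $\tilde A$ has large ``$\delta$-multiplicative energy'': for at least $\eps n^3$ quadruples $(a_1,\ldots,a_4) \in A^4$ the products $\tilde A(a_1)\tilde A(a_2)$ and $\tilde A(a_3)\tilde A(a_4)$ agree in the $G$-coordinate and lie within $\delta$ in $\SO(3)$. Partitioning $\SO(3)$ into Voronoi cells of a $\delta$-net converts this into honest multiplicative energy, and a routine Balog--Szemer\'edi--Gowers argument produces a subset $A' \subset A$ with $|A'| \geq c_1(\eps) n$ such that the thickened set $B := \tilde{A'} \cdot H$, with $H := \{1_G\} \times N_{O(\delta)}(1)$, satisfies $|B \cdot B| \leq K |B|$ for some $K = K(\eps) = O_\eps(1)$.

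Projecting $B$ onto each factor then yields small doubling in both coordinates: namely $|A' \cdot A'| \leq K |A'|$, and, using the bi-invariance of the metric, $|f(A') f(A') N_{O(\delta)}| \leq K |f(A') N_\delta|$. Suppose for contradiction that along some sequence $\delta \to 0$ one has $n \geq \alpha \delta^{-3}$ for a fixed $\alpha > 0$. Passing to an ultraproduct, one obtains an internal set $\hat{A'}$ in an ultraproduct group $\hat G$ of bounded doubling, together with the map $F := \operatorname{st} \circ \hat f : \hat{A'} \to \SO(3)$, well-defined by compactness of $\SO(3)$. The approximate-quadruple condition survives the limit as the \emph{exact} statement that $F(a_1)F(a_2) = F(a_3)F(a_4)$ for a positive Loeb-density set of quadruples with $a_1 a_2 = a_3 a_4$.

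Now apply the Breuillard--Green--Tao structure theorem to $\hat{A'}$: up to passing to a further positive-density subset, it is contained in a coset nilprogression whose Lie model is a connected nilpotent Lie group $N$. The approximate homomorphism property of $F$ promotes, in the limit, to a genuine continuous homomorphism $\phi : N \to \SO(3)$. Since $\SO(3)$ is simple and three-dimensional, its only connected nilpotent subgroups are $\{1\}$ and (conjugates of) the maximal torus $\SO(2)$, both at most one-dimensional. Hence the $\delta$-neighbourhood of $f(A')$ has Haar measure $O_K(\delta^2)$, forcing $|A'| \delta^3 \ll_\eps \delta^2$ and therefore $|A'| \ll_\eps \delta^{-1}$. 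Combined with $|A'| \gg_\eps n$ this contradicts $n \geq \alpha \delta^{-3}$ for small enough $\delta$.

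The principal obstacle is the promotion step: converting the approximate Freiman homomorphism coming from BSG, together with the coset nilprogression structure on $\hat{A'}$, into a genuine continuous homomorphism of Lie groups into $\SO(3)$. This demands careful bookkeeping of the finite-subgroup part of the nilprogression, of the infinitesimal slack in the $\SO(3)$-coordinate, and an automatic continuity argument to pass from a Loeb-measurable map to a continuous one. It is precisely this step that forces the use of ultrafilter methods (through BGT) and prevents an effective rate for $\eps(\delta)$, consistent with the remark following Theorem~\ref{mainthm}.
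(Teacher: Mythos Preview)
Your high-level strategy coincides with the paper's: form the graph of $f$ in $G \times \SO(3)$, apply a metric-entropy Balog--Szemer\'edi--Gowers argument to it (the paper invokes Tao's version, Proposition~\ref{tao-conseq}, directly), pass to a finite approximate group on the $G$-side carrying an approximate homomorphism into $\SO(3)$, invoke the Breuillard--Green--Tao structure theorem, and conclude thinness of the image from the paucity of nilpotent/finite subgroups of $\SO(3)$.

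The genuine gap is precisely the ``promotion step'' you yourself flag. You assert that the approximate Freiman homomorphism $F$ on $\hat{A'}$ lifts to a continuous homomorphism from the nilpotent Lie model $N$ into $\SO(3)$; this is not a known consequence of BGT, and even granting such a $\phi$ you would still need $F$ to be close to $\phi$ composed with the model map, plus a separate treatment of the finite-subgroup part of the coset nilprogression. In fact the paper's Section~\ref{sec7} raises exactly this promotion (a Kazhdan-type theorem for approximate groups) as an open problem. What the paper actually does is different: from the BSG step it extracts a map $\phi : B^{12} \to \SO(3)$ whose cocycle $\partial\phi$ is $\delta$-close to a finite set $S$ of size $O_\eps(1)$ (obtained by bounding the fibre of the approximate subgroup of $G\times\SO(3)$ over $1_G$), and then splits into two cases. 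If some $|\partial\phi(x,y)| > \sqrt{\delta}$, a direct commutator estimate (Lemma~\ref{lem5.1}) confines $\phi(B^4)$ to $O_\eps(1)$ cosets of a one-dimensional centraliser. If $|\partial\phi| \leq \sqrt{\delta}$ throughout, BGT is applied finitarily (Theorem~\ref{bgt-forus}) to produce a finite subgroup $H \subset B^4$ into which an iterated commutator word $w_s$ lands on a large piece; Kazhdan's theorem is then applied \emph{only to the genuine finite group $H$}, the classification of finite subgroups of $\SO(3)$ supplies a universal word $w_*$ trivial on $\tilde\phi(H)$, and thinness of $\phi(B^4)$ follows from a word-map measure argument. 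So the paper circumvents your promotion step rather than proving it; as written, your sketch stops short at the point where the real work lies.
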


\begin{proof}[Proof of Theorem \ref{mainthm}, assuming Proposition \ref{mainprop}]
Let $X$, $|X| = n$, be a maximal $\delta$-net in $\SO(3)$, suppose that $\iota : X \rightarrow G$ is an injection from $X$ into a group $(G, \cdot)$ and that there are $\eps n^2$ pairs $(x_1, x_2) \in X \times X$ with $\iota(x_1) \cdot \iota(x_2) = \iota(x_1 \circ x_2)$.  Take $A := \iota(X) \subset G$, and let $f : A \rightarrow \SO(3)$ be the inverse of $\iota$. Let $\Omega \subset A \times A$ be the set of all pairs $(a_1, a_2)$, $a_1 = \iota(x_1)$, $a_2 = \iota(x_2)$, such that $a_1 \cdot a_2 = \iota(x_1 \circ x_2)$. Thus $|\Omega| \geq \eps n^2$, and if $(a_1, a_2) \in \Omega$ then $a_1 \cdot a_2 \in A$. For $a \in A$, let $r(a)$ denote the number of pairs $(a_1, a_2) \in \Omega$ such that $a_1 \cdot a_2 = a$. Thus $\sum_{a \in A} r(a) \geq \eps n^2$ and so, by Cauchy-Schwarz, 
\[ \sum_{a \in A} r(a)^2 \geq \frac{1}{|A|}\big( \sum_{a \in A} r(a)\big)^2 \geq \eps^2 n^3.\]
The sum on the left is counting the number of quadruples $(a_1,a_2,a_3,a_4)$ with $a_1 \cdot a_2 = a_3 \cdot a_4$ and $(a_1, a_2), (a_3, a_4) \in \Omega$. From the definition of $\Omega$ we have, for any such quadruple, 
\[ f(a_1) \circ f(a_2) = f(a_1 \cdot a_2) = f(a_3 \cdot a_4) = f(a_3) \circ f(a_4).\]
By \eqref{proximal} we also have
\[ d(f(a_1) \circ f(a_2), f(a_1) f(a_2)), d(f(a_3) \circ f(a_4), f(a_3) f(a_4)) \leq \delta.\]
It follows from the triangle inequality that 
\[ d(f(a_1) f(a_2), f(a_3) f(a_4)) \leq 2 \delta.\]
Applying Proposition \ref{mainprop} (with $\delta$ replaced by $2\delta$ and $\eps$ by $\eps^2$), we see that $|X| = n = o_{\eps; \delta \rightarrow 0}(\delta^{-3})$, a contradiction if $\delta$ is small enough as a function of $\eps$.
\end{proof}

\section{Outline of the rest of the argument}\label{outline-sec}

In this section we outline the rest of the argument. Recall that a \emph{$K$-approximate group} is a subset $B$ of some ambient group which is symmetric (that is, it contains the identity $1$, and $B^{-1} = B$) and such that $B^2$ is covered by $K$ left- (or equivalently right-) translates of $B$. See \cite{tao} for further discussion and background. Note that approxmate groups need not be finite.

In the next section, we show that the existence of a map $f$ as in Proposition \ref{mainprop} would imply the existence of an approximate homomorphism from a finite approximate group to $\SO(3)$ with a ``thick'' image.  In discussing approximate homomorphisms $\phi$ it is natural to introduce the notion of \emph{cocyle}, defining $\partial \phi(x,y) := \phi(y)^{-1} \phi(x)^{-1}\phi(xy)$.

\begin{proposition}\label{sec4-main}
Let $\eps, \delta > 0$. Let $(G, \cdot)$ be a group, and let $A$ be a finite subset of $G$ of size $n \sim \delta^{-3}$. Let $f : A \rightarrow \SO(3)$ be a map with $\delta$-separated image, and with the property that there are at least $\eps n^3$ quadruples $(a_1, a_2, a_3, a_4) \in A^4$ with $a_1 \cdot a_2 = a_3 \cdot a_4$ and $d(f(a_1) f(a_2) , f(a_3) f(a_4)) \leq \delta$.
Then there is a finite $K$-approximate group $B \subset G$, $K \ll_{\eps} 1$, and a map $\phi : B^{12} \rightarrow \SO(3)$ with the following two properties. First, $\phi$ has thick image in the sense that $\mu(N_{\delta}(\phi(B^4))) \sim_{\eps} 1$. Secondly, $\phi$ is an approximate homomorphism in the sense that there is a set $S \subset \SO(3)$, $|S| \ll_{\eps} 1$, such that whenever $x,y, xy \in B^{12}$ we have $d(\partial\phi(x,y), S) \leq \delta$.
\end{proposition}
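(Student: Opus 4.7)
The hypothesis implies in particular that $A \subset G$ has multiplicative energy at least $\eps n^{3}$ (simply by projecting to $G$ and forgetting the metric condition in $\SO(3)$), so the natural strategy is first to extract a genuine approximate-group structure on $G$ via the non-abelian Balog--Szemer\'{e}di--Gowers theorem, and then to transport $f$ to this structure in a controlled way. Concretely: first I would invoke non-abelian Balog--Szemer\'{e}di--Gowers to produce $A' \subset A$ with $|A'| \gg_{\eps} n$ and $|A'(A')^{-1}| \ll_{\eps} n$, then use the standard (Tao-style) passage from small doubling to approximate groups to obtain a finite $K$-approximate group $B \subset G$ with $K \ll_{\eps} 1$ and $|B| \sim_{\eps} n$ such that, after replacing $A'$ by a further dense subset and translating in $G$, $A' \subset B$ with $|A'| \gg_{\eps} |B|$. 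The relation $n \sim \delta^{-3}$ then forces $|A'|\delta^{3} \sim_{\eps} 1$, which is what will eventually deliver the thickness claim.

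Next I would define $\phi$ by a representation-function argument on the graph
\[ \Gamma := \{ (a, f(a)) : a \in A' \} \subset G \times \SO(3). \]
For each $b \in B^{12}$, consider the fibre
\[ F_{b} := \{ s \in \SO(3) : (b, s) \in (\Gamma \cup \Gamma^{-1})^{N} \} \]
for some fixed $N = N_{\eps}$ chosen large enough that a Ruzsa-covering argument guarantees every element of $B^{12}$ admits such a representation. Using the $\SO(3)$-invariance $g N_{\delta}(1) = N_{\delta}(1) g$ recalled in the introduction, $\delta$-fattenings commute cleanly through products in $G \times \SO(3)$, so the closeness condition in the hypothesis survives iteration. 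The key analytic claim, to be deduced by iterating that closeness along the approximate-group structure of $B$, is that $F_{b}$ has $\delta$-covering number $O_{\eps}(1)$. Granting this, I would set $\phi(b)$ to be one of those $O_{\eps}(1)$ ball-centres, chosen canonically (for instance, as the nearest point of a fixed $\delta$-net in $\SO(3)$).

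Both conclusions of the proposition then fall out. For thickness, $\phi(B^{4})$ contains, up to error $\delta$, the $\delta$-separated set $f(A')$ of cardinality $\gg_{\eps} \delta^{-3}$, whence $\mu(N_{\delta}(\phi(B^{4}))) \gg_{\eps} 1$. For the cocycle condition, if $x, y, xy \in B^{12}$ then both $\phi(x)\phi(y)$ and $\phi(xy)$ lie $\delta$-close to the same fibre $F_{xy}$, so $\partial\phi(x,y) = \phi(y)^{-1}\phi(x)^{-1}\phi(xy)$ is $\delta$-close to the finite set $S$ of \emph{branch-differences} among the $O_{\eps}(1)$ ball-centres appearing across the fibres, and $|S| \ll_{\eps} 1$.

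The main obstacle, and the step genuinely requiring work, is the fibre-bounding claim: that for every $b \in B^{12}$ the set $F_{b}$ lies in a union of $O_{\eps}(1)$ balls of radius $\delta$. This is the one place where the $\SO(3)$-closeness hypothesis on $f$ (and not just the $G$-energy) is used in an essential way, and one must feed the $\delta$-closeness of $f$-products along the energy quadruples back into the approximate-group calculus on $B$ to prevent $F_{b}$ from dispersing across $\SO(3)$. The most natural route seems to be a Pl\"unnecke--Ruzsa-type argument performed in the product group $G \times \SO(3)$, with the $\SO(3)$-coordinate discretised to a $\delta$-net so that the statement becomes combinatorial; the $G$-projection remains controlled throughout because $B$ is a $K$-approximate group.
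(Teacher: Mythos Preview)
Your plan has a genuine gap at exactly the step you flag as the crux. By applying non-abelian BSG in $G$ alone---``forgetting the metric condition in $\SO(3)$''---you extract a subset $A' \subset A$ with small doubling in $G$, but nothing guarantees that $A'$ retains a positive proportion of the $\eps n^{3}$ quadruples that also satisfy the $\SO(3)$-closeness condition. Without that, the fibres $F_{b}$ have no reason to be $\delta$-thin: for a generic $\delta$-separated map $f$ on a set with small doubling in $G$, the fibres of $(\Gamma \cup \Gamma^{-1})^{N}$ over a fixed $b$ can be essentially as large as $\SO(3)$ allows. Your closing suggestion to ``perform Pl\"unnecke--Ruzsa in the product group $G \times \SO(3)$'' is the right instinct, but it amounts to abandoning the first step and starting over: the approximate group you need must live in the product, not just in $G$.

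The paper does precisely that. It works in $\G = G \times \SO(3)$ from the outset, with the product of the discrete extended metric on $G$ and the bi-invariant metric on $\SO(3)$, and applies Tao's \emph{metric-entropy} Balog--Szemer\'edi--Gowers theorem directly to the graph $X = \{(a,f(a)) : a \in A\}$. The hypothesis says exactly that the $\delta$-approximate multiplicative energy satisfies $E_{\delta}(X,X) \geq \eps n^{3}$, and Tao's theorem then produces an $O_{\eps}(1)$-approximate group $H \subset \G$ with $\calN_{\delta}(H) \sim_{\eps} n$ and a coset $gH$ meeting $X$ richly. One takes $B_{0} = \pi(g^{-1}X \cap H)$, sets $B = (B_{0} \cup \{1\} \cup B_{0}^{-1})^{3}$, defines $\phi$ on $B_{0}$ so that $(b,\phi(b)) \in g^{-1}X \cap H$, and extends $\phi$ to $B^{12}$ purely multiplicatively along chosen $B_{0}$-words (no branch choice is needed). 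The cocycle $\partial\phi(x,y)$ then automatically lands in the identity fibre $F := \pi^{-1}(1) \cap H^{36}$, and the bound $\calN_{\delta}(F) \ll_{\eps} 1$ is a one-line consequence of the approximate-group property of $H$ in the product: $|\pi(H)| \cdot \calN_{\delta}(F) \leq \calN_{\delta}(H^{37}) \ll_{\eps} n$ together with $|\pi(H)| \sim_{\eps} n$. In other words, the fibre bound is not a separate analytic step to be fought for afterwards; it falls out for free once BSG is run in the product group rather than in $G$.
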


\emph{Remarks.} Here $N_{\delta}$ means the $\delta$-neighbourhood (in the metric $d$) and $\mu$ is the normalised Haar measure on $\SO(3)$. As we shall see, no particular properties of $\SO(3)$ are used in the proof, beyond the existence of $d$ and $\mu$ and their basic properties. Note that the ``approximateness'' of $\phi$, whilst of two different types (the error set $S$ and the parameter $\delta$) is all in the \emph{range}, whereas $f$ is approximate in the domain, in that the weak homomorphism property only holds some of the time. This idea of moving the ambiguity from the domain to the range follows a line of argument pioneered by Gowers in his seminal works \cite{gowers-1, gowers-2} (based also on work of Ruzsa). Proposition \ref{sec4-main} is a consequence of the metric entropy version of the noncommutative Balog--Szemer\'edi-Gowers theorem of Tao \cite{tao}.
 
 Proposition \ref{mainprop}, and hence Theorem \ref{mainthm}, follows immediately from Proposition \ref{sec4-main} and the next result, which says that the two properties of $\phi$ in the conclusion of Proposition \ref{sec4-main} are incompatible: an approximate homomorphism from a finite approximate group to $\SO(3)$ has a thin image.

 \begin{proposition}
\label{sec-5-main} Let $B$ be a finite $K$-approximate group, let $S \subset \SO(3)$ be a set of size at most $K$, and suppose that $\phi : B^{12} \rightarrow \SO(3)$ satisfies $d(\partial\phi(x,y) , S) \leq \delta$ whenever $x,y, xy \in B^{12}$. Then $\mu(N_{\delta}(\phi(B^4))) = o_{K; \delta \rightarrow 0}(1)$.
 \end{proposition}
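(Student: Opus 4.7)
I plan to argue by contradiction, via an ultralimit/nonstandard analysis reduction to the structure theorem for approximate groups of Breuillard--Green--Tao \cite{bgt}. Assume the conclusion fails: fix $K$ and $c > 0$ and pick a sequence $(B_n, S_n, \phi_n, \delta_n)$ with $\delta_n \to 0$, each $B_n$ a finite $K$-approximate group in some $G_n$, $|S_n| \leq K$, $\phi_n : B_n^{12} \to \SO(3)$ obeying $d(\partial\phi_n(x,y), S_n) \leq \delta_n$, and $\mu(N_{\delta_n}(\phi_n(B_n^4))) \geq c$. Forming ultraproducts along a non-principal ultrafilter $\mathcal{U}$, I obtain a nonstandard $K$-approximate group $\mathbf{B} = \prod B_n / \mathcal{U}$ in $\mathbf{G} = \prod G_n / \mathcal{U}$. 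Using compactness of $\SO(3)$ and the uniform bound on $|S_n|$, I extract a standard finite set $S \subset \SO(3)$ with $|S| \leq K$ together with a standard $\SO(3)$-valued map $\phi : \mathbf{B}^{12} \to \SO(3)$ (the standard part of $\phi_n$) whose cocycle $\partial\phi$ lies exactly in $S$, and for which the closure of $\phi(\mathbf{B}^4)$ in $\SO(3)$ has Haar measure at least $c$.

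Next I apply the Hrushovski--Breuillard--Green--Tao Lie model theorem to $\mathbf{B}$: there is a connected nilpotent-by-finite real Lie group $L$ of dimension bounded by a function of $K$, an internal subgroup $\Gamma \supset \mathbf{B}^{12}$ of $\mathbf{G}$, and a surjective homomorphism $\pi : \Gamma \to L$ such that $\pi(\mathbf{B}^{12})$ contains a precompact neighbourhood $U$ of $1 \in L$, and $\ker \pi$ consists of the infinitesimal elements of $\Gamma$. The key step is then to descend $\phi$ through $\pi$: because $\phi$ is standard-valued and $\partial\phi$ takes only finitely many standard values in $S$, a cocycle argument pins down $\phi|_{\ker\pi}$ to a finite subset of $\SO(3)$ and forces $\phi$ to be constant modulo that set on each fibre of $\pi$. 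This produces a well-defined Borel cocycle $\bar\phi : U \to \SO(3)$, still satisfying $\partial\bar\phi(x,y) \in S$ whenever $x, y, xy \in U$.

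Now I upgrade $\bar\phi$ to a genuine continuous homomorphism. Pettis-/Steinhaus-type arguments promote $\bar\phi$ to a continuous function on a neighbourhood of $1$ in $L$; since $\partial\bar\phi$ is continuous and takes finitely many (hence isolated) values on the connected open set $U$, it is locally constant. Evaluating at $(1,1)$ gives $\partial\bar\phi(1,1) = \bar\phi(1)^{-1}$, so replacing $\bar\phi$ by $\bar\phi(1)^{-1}\bar\phi(\cdot)$ yields a local continuous homomorphism near $1 \in L$, which extends uniquely to a continuous homomorphism $\tilde\phi$ from the identity component $L_0$ of $L$ into $\SO(3)$. Since $L_0$ is a connected nilpotent Lie group and $\SO(3)$ has no nontrivial connected nilpotent subgroups other than its maximal tori (the circle subgroups), the image $\tilde\phi(L_0)$ is contained in a $1$-dimensional torus of $\SO(3)$. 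Together with the finitely many cosets coming from $L/L_0$ and the finite error set $S$, this means $\phi(\mathbf{B}^4)$ is contained in a finite union of cosets of a circle subgroup, a set of Haar measure zero; this contradicts the lower bound $c > 0$ and completes the proof.

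The main obstacle is the second step, descending the cocycle $\phi$ through the Lie model. Because $\SO(3)$ has trivial centre, the $2$-cocycle identity for $\partial\phi$ is twisted by conjugation by $\phi$, and $\langle S \rangle$ may be a finitely generated but dense subgroup of $\SO(3)$, so one cannot simply quotient by it. Handling this requires exploiting the nilpotent structure of $\ker \pi$ (and its internal polynomial growth) together with the finiteness of $S$ to control the values of $\phi$ on the kernel; this is the technical heart of the argument.
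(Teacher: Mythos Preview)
Your approach is genuinely different from the paper's, and the gap you flag in the descent step is real and, as stated, fatal.

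The paper does not pass to an ultralimit here. Instead it splits into two finitary cases according to whether some cocycle value $\partial\phi(x,y)$ with $x,y\in B^4$ has $|\partial\phi(x,y)|>\sqrt{\delta}$. If so, the cocycle identity
\[
a^{\phi(z)}=\partial\phi(y,z)\,\partial\phi(x,yz)\,\partial\phi(xy,z)^{-1}
\]
confines $a^{\phi(z)}$ to a $3\delta$-neighbourhood of $SSS^{-1}$; an elementary ``almost-commuting'' lemma in $\SO(3)$ then traps $\phi(B^4)$ in $O_K(1)$ translates of an $O(\sqrt{\delta})$-neighbourhood of a centraliser $C(a)$, which has measure $o(1)$. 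If not, then $\phi$ is a $\sqrt{\delta}$-approximate homomorphism on $B^{12}$; the paper invokes the \emph{finitary} BGT structure theorem to find a symmetric $Q\subset B^4$ and a genuine finite subgroup $H\subset B^4$ with $w_s(Q^4,Q^4)\subset H$ for some iterated commutator word $w_s$. Kazhdan's theorem applied to $\phi|_H$ yields a genuine homomorphism $\tilde\phi:H\to\SO(3)$ with $d(\phi,\tilde\phi)=O(\delta)$; its image $\Sigma$ is a finite subgroup of $\SO(3)$, hence satisfies a universal word law $w_*$. Composing, one obtains a fixed word $w$ in eight variables with $|w(y_1,\dots,y_8)|=O(\delta)$ for all $y_i\in\phi(Q^2g)$, and the set of such tuples has measure $o(1)$ since generic tuples in $\SO(3)$ generate a free group.

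Your plan to ``pin down $\phi|_{\ker\pi}$ to a finite subset of $\SO(3)$'' fails already in the simplest case. Take $G_n=\Z$, $B_n=\{-N_n,\dots,N_n\}$, $S_n=\{1\}$, and let $\phi_n(k)$ be rotation by $k\alpha$ about a fixed axis for a fixed irrational $\alpha$; then $\partial\phi_n\equiv 1$. In the ultralimit the Lie model is $\pi:\langle\mathbf{B}\rangle\to\R$, $\pi(k)=\operatorname{st}(k/N)$, and $\ker\pi$ contains all standard integers together with many nonstandard ones, so $\phi(\ker\pi)$ is dense in a circle, not finite. The finiteness of $S$ and any nilpotence of $\ker\pi$ are no help here. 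Of course the conclusion of the proposition still holds in this example, since the image lies in a single circle; the point is that your proposed mechanism for establishing it breaks down. In general all you can extract is that $\phi(\ker\pi)$ is itself a $K$-approximate subgroup of $\SO(3)$, possibly infinite; to continue one would need the classification of approximate subgroups of $\SO(3)$ and then to quotient by a circle rather than a finite set, which substantially changes the argument you outline.

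A second, smaller issue: even granting a set-theoretic descent, you give no reason for the resulting $\bar\phi:U\to\SO(3)$ to be Borel (it is defined via an arbitrary section of $\pi$), so the Pettis--Steinhaus step is unsupported. The paper sidesteps both difficulties by never attempting to turn $\phi$ into a global homomorphism: it only straightens $\phi$ on the genuine finite group $H$ via Kazhdan, and then propagates a \emph{word identity}, not a homomorphism, to the rest of $B^4$.
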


The proof of this uses quite different techniques and we appeal to some fairly specific features of $\SO(3)$, though it would probably be possible to adapt the proof so as to work with $\SO(3)$ replaced by, for example, any simple Lie group. We divide into two cases, according to whether or not the cocycle $\partial \phi$ takes values far from the identity. Recall that, for $g \in \SO(3)$, $|g|$ means $d(g, 1)$.

\emph{Case 1.} There exist $x,y \in B^4$ with $|\partial\phi(x,y)| > \sqrt{\delta}$. Then a fairly direct argument shows that $\phi(B^4)$ must lie in a union of $O(1)$ translates of an ``almost centraliser'' of $\partial\phi(x,y)$, and a further argument shows this has small measure. 

\emph{Case 2.} We have $|\partial\phi(x,y)| < \sqrt{\delta}$ for all $x,y \in B^4$, that is to say $\phi$ satisfies $\phi(xy) \approx \phi(x) \phi(y)$ up to an error of $\sqrt{\delta}$ in the range. If $B = B^4$ were actually a finite group, a result of Kazhdan \cite{kazhdan} then implies that we can correct $\phi$ by $O(\sqrt{\delta})$ to get a genuine homomorphism $\tilde\phi : B \rightarrow \SO(3)$. In particular, $\phi(B^4)$ lies within $O(\sqrt{\delta})$ of a finite subgroup of $\SO(3)$. However, these are all cyclic, dihedral or contained in $S_5$ and hence $\phi(B^4)$ is ``thin'' in the sense discussed above. It is tempting to try and minic the arguments of \cite{kazhdan} when $B$ is merely an approximate group, but this does not work in any obvious way. Rather we use the classification of approximate groups due to Breuillard, Tao and the author \cite{bgt} and then invoke Kazhdan's result as a black box, ending up showing that $\phi(B^4)$ can be partitioned into $O(1)$ pieces, each of which almost satisfies a nontrivial word equation, which then implies that $\phi(B^4)$ is thin. In particular we do not prove that $\phi$ can be corrected to a homomorphism $\tilde \phi$; it would be interesting to explore this direction. For further remarks see Section \ref{sec7}.

\section{An application of Tao's metric entropy BSG theorem}

In this section we establish Proposition \ref{sec4-main} . Let $\G := G \times \SO(3)$, and let $\d : \G \times \G \rightarrow \G$ be the product of the discrete (extended) metric $d_{\counting}$ on $G$, where the distance between distinct points is $\infty$, and the metric $d$ on $\SO(3)$. Let $\bmu = \mu_{\counting} \times \mu$, where $\mu_{\counting}$ is the counting measure on $G$ (that is, the measure of any finite set $A \subset G$ is simply $|A|$) and $\mu$ is normalised Haar measure on $\SO(3)$. The group $\G$, endowed with the measure $\bmu$ and the (extended) metric $\d$, is \emph{locally reasonable} in the sense of Tao \cite[Definition 6.3]{tao}\footnote{Although Tao does not explicitly allow extended metrics, this creates no problems in his arguments, and is necessary to ensure the doubling property $\mu_{\counting}(B(2r)) \sim \mu_{\counting}(B(r))$ for balls $B()$ in the discrete (extended) metric. In fact if one applies his results to the discrete (extended) metric, one recovers the standard finitary theory of noncommutative sumset estimates.}.

To state the result from \cite{tao} that we will need, we recall the definition of covering numbers used in that paper: if $X$ is a subset of a metric space, $\calN_{\eta}(X)$ is the least number of balls of radius $\eta$ necessary to cover $X$. We also define the $\eta$-approximate multiplicative energy $E_{\eta}(X,X)$ of a set to be $\calN_{\eta}(Q_{\eta}(X,X))$, where
\[ Q_{\eta}(X,X) := \{ (x_1, x_2, x_3,x_4) \in X^4 : \d(x_1x_2, x_3x_4) \leq \eta\}.\] The metric entropy here is with respect to the product metric on $X^4$.

The following is the implication (i) $\Rightarrow$ (iv) of \cite[Theorem 6.10]{tao}, specialised to our setting.

\begin{proposition}[Tao]\label{tao-conseq}
Suppose that $E_{\eta}(X,X) \geq \frac{1}{K} \calN_{\eta}(X)^3$. Then there exists a $K^{O(1)}$-approximate subgroup $H \subset \G$ and an element $g \in \G$ such that \emph{(1)} $\calN_{\eta}(H) \sim K^{O(1)}\calN_{\eta}(X)$ and \emph{(2)} $\calN_{\eta}(X \cap g H)\sim K^{O(1)}\calN_{\eta}(X)$.
\end{proposition}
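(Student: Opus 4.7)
The plan is to invoke Tao's metric-entropy noncommutative Balog--Szemer\'edi--Gowers theorem [Theorem 6.10 of \cite{tao}] as a black box, so the work consists of (a) verifying that its hypotheses hold for our setup $(\G, \d, \bmu)$ and (b) reading off the conclusion. For (a), Tao's ``locally reasonable'' property requires local compactness together with a uniform doubling estimate for balls at the scales under consideration. The $\SO(3)$ factor, with bi-invariant Hilbert--Schmidt metric and normalised Haar measure $\mu$, is a compact Lie group and satisfies doubling by standard Riemannian considerations. The $G$ factor, with the discrete extended metric $d_\counting$ and counting measure $\mu_\counting$, satisfies doubling trivially: every ball of finite positive radius is a singleton, so the doubling ratio is $1$. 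The product $(\G, \d, \bmu)$ then inherits doubling coordinate-wise, and the footnote already handles the mild extension needed to accommodate extended metrics.

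With the setup verified, I would apply the implication (i) $\Rightarrow$ (iv) of \cite[Theorem 6.10]{tao} directly to the set $X \subset \G$, at scale $\eta$, with energy bound $E_\eta(X,X) \geq \frac{1}{K} \calN_\eta(X)^3$. For context, the underlying argument follows the standard BSG scheme with every cardinality replaced by the covering number $\calN_\eta$. First a graph-theoretic pigeonhole on paths of length two in the bipartite graph whose edges encode the near-equalities $\d(x_1 x_2, x_3 x_4) \leq \eta$ extracts a subset $X' \subseteq X$ with $\calN_\eta(X') \gg K^{-O(1)} \calN_\eta(X)$ and $\calN_\eta(X' \cdot X'^{-1}) \ll K^{O(1)} \calN_\eta(X')$. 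Then metric-entropy analogues of Plünnecke--Ruzsa and Ruzsa covering yield a $K^{O(1)}$-approximate subgroup $H \subset \G$ with $\calN_\eta(H) \sim K^{O(1)} \calN_\eta(X)$ that covers $X'$ by $K^{O(1)}$ translates, and a final pigeonhole selects $g \in \G$ with $\calN_\eta(X \cap gH) \gg K^{-O(1)} \calN_\eta(X)$, which is conclusion (iv).

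The main obstacle, were one to redo the proof rather than cite it, would be the metric-entropy bookkeeping throughout the BSG graph argument: in the cardinality setting each element contributes a single unit, whereas at scale $\eta$ one must track multiplicities of $\eta$-balls through iterated products and through the pigeonhole steps that extract the popular products. This is precisely what \cite{tao} handles in the required generality, so in the present paper the proposition reduces to checking the locally reasonable hypothesis and quoting the relevant implication.
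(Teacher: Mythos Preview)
Your proposal is correct and matches the paper's approach exactly: the paper does not prove this proposition but simply states it as the implication (i) $\Rightarrow$ (iv) of \cite[Theorem 6.10]{tao}, having verified in the preceding paragraph that $(\G,\d,\bmu)$ is locally reasonable (with the same observations you make about the discrete extended metric on $G$ and the footnote handling that wrinkle). Your additional sketch of the BSG mechanism is accurate but goes beyond what the paper includes.
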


\begin{proof}[Proof of Proposition \ref{sec4-main}]
Let us first recall the hypotheses under which we are operating, which are those of Proposition \ref{sec4-main}, namely that $f : A \rightarrow \SO(3)$ is a map with the property that there are at least $\eps n^3$ quadruples $(a_1,a_2,a_3,a_4)$ with $a_1 \cdot a_2 = a_3 \cdot a_4$ and $d(f(a_1) f(a_2) , f(a_3) f(a_4)) \leq \delta$.

Take $X = \{(a, f(a)) : a \in A\} \subset \G$ to be the graph of $f$. Let $\pi : \G \rightarrow G$ be projection. Then, since $\pi$ is injective on $X$ and the metric on $G$ is discrete,
\[ \calN_{\delta}(X) = |X| = |A| = n.\]

By assumption, $|Q_{\delta}(X, X)| \geq \eps n^3$.  Since $\pi^{\otimes 4} : \G^4 \rightarrow G^4$ is injective on $Q_{\delta}(X, X)$, we have $E_{\delta}(X,X) = |Q_{\delta}(X,X)| \geq \eps n^3$.
Therefore the hypothesis of Proposition \ref{tao-conseq} is satisfied with $K = \eps^{-1}$. For the rest of the proof of Proposition \ref{sec4-main}, all instances of the $\gg$, $\ll$ and $O()$ notations may depend on $\eps$ but this will not be explicitly indicated. Applying Proposition \ref{tao-conseq}, we obtain an $O(1)$-approximate subgroup $H \subset \G$ and an element $g = (x,y) \in \G$ satisfying \begin{equation}\label{p41-conseq} \calN_{\delta}(H) \sim \calN_{\delta}(X \cap gH) \sim \calN_{\delta}(X) = n.\end{equation} 
Let $B_0 := \pi(g^{-1} X \cap H)$. Using the fact that $\pi$ is injective on $X$ and that $G$ is discrete, we have
\begin{equation}\label{b0-size} |B_0| = |\pi(g^{-1} X \cap H)| = |X \cap gH| \sim n.\end{equation}
Using the fact that the metric on $G$ is discrete once more, we also have $|\pi(H)| \leq \calN_{\delta}(H)$, and hence from \eqref{p41-conseq}, \eqref{b0-size} it follows that 
\begin{equation}\label{H-upper} |\pi(H)| \sim n.\end{equation}
(Note that $H$ itself may well be infinite). Let $\phi : B_0 \rightarrow \SO(3)$ be such that $(b, \phi(b)) \in g^{-1} X \cap H$ for all $b \in B_0$. Then $\phi(b)$ takes values in $y^{-1} f(A)$ and hence (since the metric on $\SO(3)$ is bi-invariant) is $\delta$-separated.

Now the property of being an approximate group is preserved under $\pi$. Thus $\pi(H)$ is an $O(1)$-approximate group and in particular $|\pi(H)^3| \ll |\pi(H)|$. From \eqref{b0-size}, \eqref{H-upper} it follows that $|B_0^3| \ll |B_0|$. Therefore by \cite[Corollary 3.11]{tao} we see that $B := (B_0 \cup \{1\} \cup B_0^{-1})^3$ is an $O(1)$-approximate group.

Extend $\phi$ to a map from $B^{12}$ to $\SO(3)$ as follows: for each $x \in B^{12} \setminus B_0$, write $x = b_1^{\eps_1} \cdots b_{36}^{\eps_{36}}$ with $\eps_1,\dots, \eps_{36} \in \{-1,0,1\}$ and $b_1,\dots, b_{36} \in B_0$. If there is more than one such representation of a given $x$, choose one arbitrarily. Now define
\[ \phi(x) := \phi(b_1)^{\eps_1} \cdots \phi(b_{36})^{\eps_{36}}.\]
Note that $\phi(B^4)$ contains $\phi(B_0)$ which, as observed above, is a collection of $\sim n$ $\delta$-separated points. Since $n \sim \delta^{-3}$ (and the volume of a $(\delta/2)$-neighbourhood in $\SO(3)$ is $\sim \delta^3$), it follows that $\mu(N_{\delta}(\phi(B^4))) \sim 1$.

To conclude the proof, we must show that the cocycle $\partial\phi(x,y)$ takes values $\delta$-close to some small set $S$, whenever $x,y, xy \in B^{12}$. Since $\{ (b, \phi(b)) : b \in B_0\} \subset H$, we see that if $x, y, xy \in B^{12}$ then $\partial\phi(x,y) = \phi(y)^{-1} \phi(x)^{-1}\phi(xy)$ lies in the fibre $F := \pi^{-1}(1) \cap H^{36}$. To conclude the proof of Proposition \ref{sec4-main}, it is therefore enough to prove that 
\begin{equation}\label{F-bd} \calN_{\delta}(F) \sim 1.\end{equation}

To prove \eqref{F-bd}, observe first that, since $H$ is a $K$-approximate group for some $K = O(1)$, $H^{37}$ is covered by $K^{36}$ translates of $H$, and so (by \eqref{p41-conseq} and the bi-invariance of the metric) we have 
\begin{equation}\label{n37} N_{\delta}(H^{37}) \sim n.\end{equation} However, $H^{37}$ contains a translate of $F$ above every point of $\pi(H)$, and thus
\[ |\pi(H)| \calN_{\delta}(F) \leq N_{\delta}(H^{37}).\]
The desired estimate \eqref{F-bd} follows immediately from this, \eqref{H-upper} and \eqref{n37}.
\end{proof}

\section{Case 1: a large element in the error set}

We turn now to the proof of Proposition \ref{sec-5-main}. The reader may wish to recall the outline given in Section \ref{outline-sec}. In this section we look at the first case discussed there, in which there are $x,y \in B^4$ such that $|\partial\phi(x,y)| > \sqrt{\delta}$. Before giving the main argument, let us record a lemma concerning almost commuting rotations. This must surely exist in the literature but I could not locate a reference. Here, and in what follows, we define the conjugate $a^g$ to be $g^{-1} a g$ and the commutator $[a,g]$ to be $a^{-1} g^{-1} a g$.

\begin{lemma}\label{lem5.1}
Let $a, g \in \SO(3)$, $a \neq 1$.  Then $d(g, C(a)) \ll \frac{|[a,g]|}{|a|}$, where $C(a)$ denotes the centraliser of $a$.
\end{lemma}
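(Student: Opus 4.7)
The plan is to turn commutator information into information about rotation axes, and then exhibit an explicit nearby element of $C(a)$. The starting observation is that by bi-invariance of the Hilbert--Schmidt norm,
\[
|[a,g]| \;=\; |a^{-1}g^{-1}ag - 1| \;=\; |g^{-1}ag - a|.
\]
Writing $a = R(v,\theta)$ with $v$ a unit axis and $\theta \in (0,\pi]$, so that $|a| = 2^{3/2}\sin(\theta/2)$, one has $g^{-1}ag = R(v',\theta)$ with $v' := g^{-1}v$. Thus $|[a,g]|$ is exactly the Frobenius distance between two rotations of the same angle about different axes.

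The main computation is to bound that distance from below. Using Rodrigues' formula $R(n,\theta) = \cos\theta\, I + \sin\theta\,[n]_\times + (1-\cos\theta)nn^T$ and writing $u := v - v'$, the difference $R(v,\theta) - R(v',\theta)$ splits into an antisymmetric piece $\sin\theta\,[u]_\times$ and a symmetric piece $(1-\cos\theta)(vu^T + uv'^T)$. In the Frobenius inner product these pieces are orthogonal (the cross term reduces to $u\cdot(u\times v)$ and $v'\cdot(u\times u)$, both of which vanish), and using $v \cdot u = |u|^2/2$ together with $v'\cdot u = -|u|^2/2$ a direct calculation gives
\[
|[a,g]|^2 \;=\; 8\sin^2(\theta/2)\,|u|^2\bigl(1 - \tfrac14\sin^2(\theta/2)\,|u|^2\bigr).
\]
Provided we are in the regime $|u|^2 \leq 2$ (which holds whenever $v'$ lies at least as close to $v$ as to $-v$), the parenthesised factor is at least $1/2$, and hence $|[a,g]|^2 \geq \tfrac12 |a|^2 |u|^2$, giving $|u| \leq \sqrt{2}\,|[a,g]|/|a|$.

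To finish I produce the required $h \in C(a)$ by realigning axes. Let $s$ be the minimal rotation sending $v$ to $v'$, with axis $v \times v'$ and angle $\alpha$ determined by $\cos\alpha = v\cdot v' = 1 - |u|^2/2$, and put $h := gs$. Then $hv = g(sv) = gv' = v$, so $h$ fixes the axis of $a$ and hence $h \in C(a)$ (this is clear for $\theta \neq \pi$; when $\theta = \pi$ the centraliser is only larger). Bi-invariance of $\|\cdot\|_F$ gives $|g-h| = |1-s| = 2^{3/2}\sin(\alpha/2) = \sqrt 2\,|u|$, using $2\sin^2(\alpha/2) = 1 - \cos\alpha = |u|^2/2$. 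Combining the two steps yields the desired bound $d(g,C(a)) \leq 2\,|[a,g]|/|a|$.

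The one subtlety I anticipate is the degenerate ``axis-flipping'' range $|u|^2 > 2$, in which the above identity loses its teeth and the axis-realigning $s$ is no longer close to the identity. Inspecting the displayed formula, this case forces $\sin(\theta/2)$ close to $1$, i.e.\ $\theta$ close to $\pi$, where $a$ is near a genuine half-turn whose centraliser contains $\pi$-rotations about every axis perpendicular to $v$. I expect the correct resolution is a short perturbative argument off the locus $\theta = \pi$ matching $g$ to one of these extra centralising elements; this is the only place where any non-formal feature of $\SO(3)$ (the structure of the centraliser of a half-turn) enters the argument.
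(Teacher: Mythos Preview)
Your approach parallels the paper's closely: both reduce to an explicit formula for $|[a,g]|$ in terms of the rotation angle $\theta$ of $a$ and the angular displacement of the axis under conjugation by $g$. You work with Rodrigues' formula and the vector $u = v - g^{-1}v$; the paper instead uses Euler angles $g = r(\beta_1)r'(\alpha)r(\beta_2)$ and reduces to $|[a,r'(\alpha)]| = 2^{5/2}xy(1-x^2y^2)^{1/2}$ with $x = |\sin(\theta/2)|$, $y = |\sin(\alpha/2)|$. Since $|u| = 2y$, your displayed identity is the same formula. Your main-case bound (the regime $|u|^2 \le 2$) is correct and corresponds exactly to the paper's case (i), with your axis-realigning element $h = gs$ playing the role of the paper's $r(\beta_1+\beta_2)$.

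However, your anticipated resolution of the degenerate case does not work, and in fact the lemma \emph{as stated} is false. Take $a$ to be rotation by $\pi - \epsilon$ about the $z$-axis and $g$ rotation by $\pi$ about the $x$-axis. Then $g^{-1}ag = a^{-1}$, so $[a,g] = a^{-2}$ and $|[a,g]|/|a| = 2|\cos(\theta/2)| \sim \epsilon$. But for every $\epsilon \in (0,\pi)$ the centraliser $C(a)$ is only the circle of $z$-rotations, and one computes $d(g,C(a)) = 2\sqrt{2}$ independently of $\epsilon$. Your error is the sentence ``$a$ is near a genuine half-turn whose centraliser contains $\pi$-rotations about every axis perpendicular to $v$'': those extra elements lie in $C(a)$ only when $\theta = \pi$ exactly, not when $\theta$ is merely close to $\pi$. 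The paper's own proof makes the identical slip in its case (ii), asserting ``$r'(\pi)$ commutes with $a$'', which is false unless $a$ is an exact involution.

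The correct statement, which both arguments actually establish, is that $g$ lies within $O(|[a,g]|/|a|)$ of the set of elements $h$ with $h^{-1}ah \in \{a,a^{-1}\}$, i.e.\ the normaliser of the maximal torus through $a$; in the degenerate case one takes $s$ to be the minimal rotation sending $v$ to $-v'$ rather than to $v'$. This set is the union of $C(a)$ with a single coset of it, hence still one-dimensional, and the application in the paper (covering $\phi(B^4)$ by $O_K(1)$ thin tubes) goes through with at most a factor of $2$ in the number of translates.
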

\begin{proof}
It is easy to check that if the statement is true for $a$, then it is true for any conjugate of $a$, and thus we may assume that \[ a = r(\theta) := \left(\begin{smallmatrix} \cos \theta & \sin \theta & 0 \\ -\sin \theta & \cos \theta & 0 \\ 0 & 0 & 1 \end{smallmatrix}\right).\] 
By the existence of Euler angles, every $g$ can be written as $g = r(\beta_1)  r'(\alpha) r(\beta_2)$, where 
\[r'(\alpha) := \left(\begin{smallmatrix} 1 & 0 & 0 \\ 0 & \cos \alpha & \sin\alpha \\ 0 & -\sin \alpha & \cos \alpha \end{smallmatrix}\right).\]
Since $r(\beta_1), r(\beta_2)$ commute with $a$ we have $|[a,g]| = |[a, r'(\alpha)]|$. A computation gives
\[  |[a, r'(\alpha)]| = d( r(\theta) r'(\alpha), r'(\alpha) r(\theta)) =  2^{5/2} xy (1 - x^2 y^2)^{1/2},\]
where $x := |\sin(\theta/2)|$, $y = |\sin(\alpha/2)|$.
Thus, writing $\eta := |[a,g]|$, we see that either (i)  $xy \ll \eta$, or (ii) $y \geq 1 - O(\eta^2)$. Note also that $x = 2^{-3/2}|a|$. 

In case (i), $|\tilde r(\alpha)| = 2^{3/2} |\sin(\alpha/2)| = 2^{3/2} y \ll \eta/|a|$. Therefore
\[ d(g, C(a)) \leq d(g, r(\beta_1 + \beta_2))  = \Vert r(\beta_1) (r'(\alpha) - 1) r(\beta_2) \Vert \leq 8 | r'(\alpha)| \ll \eta/|a|,\] where in the penultimate step we used the submultiplicativity of $\Vert \cdot \Vert$. This concludes the proof in this case.

In case (ii), $d( r'(\alpha), r'(\pi)) = 2^{3/2} |\cos(\alpha/2)| \ll \eta$. Noting that $r'(\pi)$ commutes with $a$, we have
\[ d(g, C(a)) \leq d(g, r(\beta_1) r'(\pi) r(\beta_2)) = \Vert r(\beta_1)(r'(\alpha) - r'(\pi)) r(\beta_2) \Vert \ll \eta.\] This completes the proof of the lemma.
\end{proof}

Now we return to the proof of Proposition \ref{sec-5-main} (first case). Let $z \in B^4$ be arbitrary. By writing $\phi(xyz)$ in two different ways one easily obtains the cocycle equation
\begin{equation}\label{cocycle-eq} a^{\phi(z)}  = \partial\phi(y,z) \partial\phi(x,yz)\partial\phi(xy,z)^{-1} ,\end{equation} where $a := \partial\phi(x,y)$. Since $x,y,z \in B^4$, all the pairwise products as well as the triple product $xyz$ lie in $B^{12}$, and hence by the hypotheses of Proposition \ref{sec-5-main} the three cocycles $\partial\phi(y,z)$, $\partial\phi(x,yz)$, $\partial\phi(xy,z)$ lie in the $\delta$-neighbourhood of $S$. It follows from \eqref{cocycle-eq} that, for all $z \in B^4$, $d(a^{\phi(z)}, SSS^{-1}) \leq 3 \delta$. Consequently, we may find a set $z_1,\dots, z_k$, $k \leq |SSS^{-1}| \leq K^3$, of elements of $B^4$ such that for every $z \in B^4$ there is some $i$ such that $d(a^{\phi(z)}, a^{\phi(z_i)}) \leq 6 \delta$. Equivalently, $|[a, \phi(z)\phi(z_i)^{-1}]| \leq 6 \delta$.  By Lemma \ref{lem5.1} we see that for every $z \in B^4$ there is some $i$ such that $d(\phi(z) \phi(z_i)^{-1}, C(a)) \ll \sqrt{\delta}$. Thus $\phi(B^4)$ is contained in the $(\sqrt{\delta})$-neighbourhood of at most $k$ translates of $C(a)$, a set whose measure tends to $0$ as $\delta \rightarrow 0$, uniformly in $a \neq 1$. This concludes the proof in the first case.

\section{Case 2: Almost homomorphisms}\label{sec6}

We now turn to the second case of Proposition \ref{sec-5-main}. This is the case in which $|\partial\phi(x,y)| \leq \sqrt{\delta}$ whenever $x,y,xy \in B^{12}$, and we wish to conclude that $\mu(N_{\delta}(\phi(B^4))) = o_{K; \delta \rightarrow 0}(1)$. For notational convenience, redefine $\sqrt{\delta}$ to $\delta$, thus we have $|\partial\phi(x,y)| \leq \delta$, or equivalently $\phi$ satisfies the almost-homomorphism property
\begin{equation}\label{cocycle-small} d(\phi(xy), \phi(x)\phi(y)) \leq \delta\end{equation} whenever $x,y,xy \in B^{12}$. We wish to conclude that $\mu(N_{\delta}(\phi(B^4))) = o_{K; \delta \rightarrow 0}(1)$.

We will repeatedly use the fact, easily established using \eqref{cocycle-small} and induction, that if $Q$ is a symmetric set with $Q^m \subset B^{12}$ then
\begin{equation}\label{approx-word} d( \phi(w(x,y)), w(\phi(x), \phi(y))) \leq m \delta\end{equation} for all $x,y \in Q$, where $w$ is any word of length at most $m$ in the variables $x,y$. Of particular interest to us will be the commutator words $w_1(a,b) := [a,b]$, $w_{i+1}(a,b) := [a, w_i(a,b)]$. The length of $w_s$ is $\ell(w_s) = 3 \cdot 2^s - 2$.

We will also use the following result of Breuillard, Tao and the author \cite{bgt}.

\begin{theorem}\label{bgt-forus}
Suppose that $B$ is a $K$-approximate group. Then there is some $s = O_K(1)$, a symmetric set $Q$ of size $\gg_{K} |B|$ and a finite group $H \subset B^4$ such that
\begin{enumerate}
\item $Q^m \subset B^4$, where $m = 10 \ell(w_s)$;
\item If $x,y \in Q^4$ then $w_s(x,y) \in H$.
\end{enumerate}
\end{theorem}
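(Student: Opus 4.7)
The plan is to deduce this from the main structure theorem for finite approximate groups of Breuillard, Tao and the author \cite{bgt}, which says that any $K$-approximate group $B$ is covered by $O_K(1)$ left translates of a \emph{coset nilprogression} $P \subset B^4$ of rank $r \leq r(K)$ and nilpotency step $s \leq s(K)$. Such a $P$ has the form $H \cdot \{u_1^{n_1} \cdots u_r^{n_r} : |n_i| \leq L_i\}$, where $H$ is a finite subgroup, normal in $\Gamma := \langle u_1, \ldots, u_r\rangle$, and the quotient $\Gamma/H$ is nilpotent of class $s$. The $H$ and $s$ produced in this way will be the ones appearing in the conclusion of Theorem \ref{bgt-forus}; in particular $H \subset P \subset B^4$, as required.

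Setting $m := 10\ell(w_s) = O_K(1)$, I would take $Q$ to be a suitable ``$m$-fold dilate'' of $P$, namely
\[
Q := H \cdot \{ u_1^{n_1} \cdots u_r^{n_r} : |n_i| \leq \lfloor L_i/(Cm) \rfloor \}
\]
for an appropriate constant $C = C(r,s) = O_K(1)$. A standard calculation for coset nilprogressions of bounded rank and step (essentially Baker--Campbell--Hausdorff applied to the bounded number of commutator terms that can appear; see \cite{bgt}) then shows that $Q$ is symmetric, $Q^m \subset P \subset B^4$, and $|Q| \geq (Cm)^{-r} |P| \gg_K |P|$. Combined with the $O_K(1)$-fold covering of $B$ by translates of $P$, this yields $|Q| \gg_K |B|$, giving conclusion (1).

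For (2), observe that $Q^4 \subset P^4 \subset \Gamma$. For any $x,y \in \Gamma$, the iterated commutator $w_s(x,y)$ lies in $\gamma_{s+1}(\Gamma)$: indeed $w_1(x,y) = [x,y] \in \gamma_2(\Gamma)$, and each further bracket with $x$ raises the lower-central-series index by one, so $w_i \in \gamma_{i+1}$. Because $\Gamma/H$ has nilpotency class $s$, we have $\gamma_{s+1}(\Gamma) \subset H$, hence $w_s(x,y) \in H$ for every $x,y \in Q^4$.

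The main obstacle is not conceptual but one of bookkeeping: one must invoke the BGT structure theorem in a form that keeps track of the precise power of $B$ in which the nilprogression $P$, and hence its dilate $Q$, is contained, and one needs a clean ``dilation lemma'' for coset nilprogressions ensuring $Q^m \subset P$ with only a controlled density loss $(Cm)^{-r}$. These are routine within the framework of \cite{bgt}, but the dependence of $C$ on $r$ and $s$ (both $O_K(1)$) has to be tracked carefully so that $|Q|/|B|$ remains of the required order $\gg_K 1$.
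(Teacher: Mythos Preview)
Your proposal is correct and follows essentially the same route as the paper: invoke the BGT structure theorem to obtain a coset nilprogression $P \subset B^4$ of bounded rank and step with $|P| \gg_K |B|$, take $Q$ to be the $1/m$-dilate of $P$, appeal to a dilation lemma (the paper cites \cite[Lemma~C.1]{bgt}) for $Q^m \subset P$ and $|Q| \gg_K |P|$, and derive (2) from the $s$-step nilpotency of the quotient by $H$. Your lower-central-series justification of (2), showing $w_s(x,y)\in\gamma_{s+1}(\Gamma)\subset H$, is slightly more explicit than the paper's one-line appeal to ``$P/H$ is $s$-step nilpotent'', but the content is identical.
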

\begin{proof}
\cite[Theorem 2.10]{bgt} states that $B^4$ contains an $O_K(1)$-proper coset nilprogression $P = P_H(u_1,\dots, u_r; N_1,\dots, N_r)$ with rank $r = O_K(1)$, step $s = O_K(1)$, and with $|P| \gg_K |B|$. We refer the reader to \cite[Section 2]{bgt} for the definitions required here, though the reader can fairly happily treat these concepts as black boxes for the purpose of this discussion. In particular, since $P$ contains $H$, so does $B^4$. Set $m = 10 \ell(w_s)$, thus $4 \leq m \ll_K 1$, and let $Q := P_H(u_1,\dots, u_r; \frac{1}{m} N_1, \dots, \frac{1}{m} N_r)$. It follows from the definitions in \cite[Section 2]{bgt} that $Q$ is symmetric and $Q^m \subset P$, and it follows from \cite[Lemma C.1]{bgt} that $|Q| \gg_{K,m} |P|$. Finally, if $x,y \in Q^4$ then certainly $x,y \in P$, and so from the fact that $P/H$ is $s$-step nilpotent we see that indeed $w_s(x,y) \in H$.
\end{proof}

From now on we drop explicit mention of $K$; all bounds can (and will) depend on $K$. It follows from the (nonabelian) Ruzsa covering lemma \cite[Lemma 3.6]{tao} that $B^4$ is a union of $O(1)$ translates $Q^2 g_i$, where $g_i \in B^{4}$. Evidently it suffices to show that $\mu(N_{\delta}(\phi(Q^2 g_i))) = o_{\delta \rightarrow 0}(1)$ for each $i$. Fix some $i$ and set $g := g_i$.

Suppose that $x_1, x_2, x_3, x_4 \in Q^2 g$. Then $x_1 x_2^{-1}, x_3 x_4^{-1} \in Q^4$, and so by Theorem \ref{bgt-forus} (2), $w_s(x_1x_2^{-1}, x_3 x_4^{-1}) \in H$. It follows from \eqref{approx-word} (and Theorem \ref{bgt-forus} (1)) that
\begin{equation}\label{word-1-small} d( w_s(\phi(x_1 x_2^{-1}), \phi(x_3 x_4^{-1})), \phi(H) ) =  O(\delta).\end{equation} 
Since $Q^4 \subset B^4$ and $g \in B^{4}$, we have $x_1, x_2^{-1}, x_3, x_4^{-1}, x_1 x_2^{-1}, x_3 x_4^{-1} \in B^{8}$, and so by \eqref{cocycle-small} we have
\[ d(\phi(x_1x_2^{-1}), \phi(x_1)\phi(x_2)^{-1}), d(\phi(x_3 x_4^{-1}), \phi(x_3) \phi(x_4)^{-1}) \leq \delta.\]
Using this many times in \eqref{word-1-small} (and the fact that the $\delta$-neighbourhood of $1$ is normalised by $\SO(3)$, to move all the errors of $\delta$ to the right) we obtain
    \begin{equation}\label{eq7} d(\tilde w_s (\phi(x_1), \phi(x_2), \phi(x_3), \phi(x_4)), \phi(H) ) = O(\delta),\end{equation} where $\tilde w_s(t_1, t_2, t_3, t_4) := w_s(t_1t_2^{-1}, t_3 t_4^{-1})$. Now since $H \subset B^4$ we see that $\phi$ is defined on all of $H$, and of course it still satisfies the approximate homomorphism condition \eqref{cocycle-small}. It is known that under these conditions there is a genuine homomorphism $\tilde\phi : H \rightarrow \SO(3)$ such that $d(\phi(h), \tilde\phi(h)) = O(\delta)$ for all $h \in H$. For the proof\footnote{Kazhdan acknowledges that in the compact case the result was obtained earlier by Grove, Karcher and Ruh \cite{gkr}, and in fact similar ideas go back to Turing \cite{turing}.}, see Kazhdan \cite{kazhdan}.
Thus, writing $\Sigma \subset \SO(3)$ for the subgroup $\tilde\phi(H)$, it follows that 
\begin{equation}\label{eq4} d(\tilde w_s (\phi(x_1), \phi(x_2), \phi(x_3), \phi(x_4)), \Sigma) = O(\delta)\end{equation} for all $x_1, x_2, x_3, x_4 \in Q^2 g$.
However, it is well-known\footnote{One could get away with weaker results here, such as Jordan's theorem.} that all finite subgroups of $\SO(3)$ are either cyclic, dihedral, or isomorphic to a subgroup of $S_5$. In particular there is some fixed universal word $w_*$ (for instance, $w_*(a,b) = [[a,b]^b, [a,b]]^{60}$), which is trivial on $\Sigma \times \Sigma$. Since $w_*$ is Lipschitz\footnote{Any word map is Lipschitz with Lipschitz constant the length of the word, since if $d(t_i, t'_i) \leq \delta$ then $d(t_1 \cdots t_m, t'_1 \cdots t'_m) \leq m \delta$, by an easy induction.}, it follows from \eqref{eq4} that for all $y_1, \dots, y_8 \in \phi(Q^2 g)$ we have
\[ |w(y_1,\dots, y_8)| = d(w(y_1,\dots, y_8), 1) =O(\delta),\]
where $w(t_1, \dots, t_8) := w_*(\tilde w_s(t_1, t_2, t_3, t_4), \tilde w_s(t_5, t_6, t_7, t_8))$. Using the Lipschitz property of $w$, the same is true if $y_1, \dots, y_8 \in N_{\delta}(\phi(Q^2 g))$, of course at the expense of weakening the implicit constant in $O(\delta)$. That is, $N_{\delta}(\phi(Q^2 g))^8 \subset W_{O(\delta)}$, where
\[ W_{\eta} := \{(y_1,\dots, y_8) \in \SO(3)^8 : |w(y_1,\dots, y_8)| \leq \eta\}.\] It therefore suffices to check that $\lim_{\eta \rightarrow 0}\mu^{\otimes 8}(W_{\eta}) = 0$ which, by basic measure theory, is equivalent to the statement that 
\begin{equation}\label{meas-zero} \mu^{\otimes 8}\{ (y_1,\dots, y_8) \in \SO(3)^8 : w(y_1,\dots, y_8) = 1\}= 0.\end{equation}  However, this is so because (see \cite{epstein}) almost all $8$-tuples of elements of $\SO(3)$ generate a free group.
 
\section{Further comments and open questions}\label{sec7}

We have already remarked that it would be interesting to understand more about the structure of approximate homomorphisms $\phi : B \rightarrow \SO(3)$ where $B$ is an approximate group. Does an analogue of Kazhdan's theorem hold for them, that is to say if \eqref{cocycle-small} holds, is there $\tilde \phi : B \rightarrow \SO(3)$ satisfying $\tilde\phi(xy) = \tilde\phi(x) \tilde\phi(y)$ and with $d(\phi(x), \tilde\phi(x)) = O(\delta)$ for all $x$? It might be possible to answer this question using the thesis of Carolino \cite{carolino}, applied to the graph of $\phi$. This would allow for an alternative to the arguments of Section \ref{sec6} by appealing to \cite{breuillard-green}, which says that finite approximate subgroups of $\SO(3)$ are almost abelian. 

The example of Gowers and Long considered in this paper is natural, but has the slightly unsatisfactory property that the operation $\circ$ is not cancellative. It is only weakly cancellative in the sense that for a given $x$ and $z$ there are at most $O(1)$ values of $y$ for which $x \circ y = z$. I have some notes on a potential example which is fully cancellative, so its multiplication table is a latin square. Roughly speaking, it comes from replacing $\SO(3)$ by a compact portion of the Heisenberg group (Jason Long informs me that he and Gowers also considered such examples). I initially thought that the Heisenberg group, being almost abelian, would be much easier to analyse than $\SO(3)$, but this turned out not to be the case. The main reason is that the Heisenberg group does contain approximate subgroups with ``thick'' image. 

The following question, which I cannot currently resolve, came from this line of thinking. Consider the Heisenberg group $\H(\R) = \{(x,y,z) : x, y, z \in \R\}$ with the group operation $\ast$ being $(x_1, y_1, z_1) \ast (x_2, y_2, z_2) = (x_1 + x_2, y_1 + y_2, z_1 + z_2 + x_1 y_2)$.

\begin{question}
Let $K$ be a fixed real parameter and let $N$ be a large integer. Suppose that $B$ is a $K$-approximate group, and that $\phi : B \rightarrow \H(\R)$ is a map with the following properties:
\begin{enumerate}
\item If $\phi$ takes values in $\{(x,y,z) \in \H(\R): x,y \in \frac{1}{N} \Z, |x|, |y|, |z| \leq 10\}$;
\item For every $x, y \in \frac{1}{N}\Z$ with $|x|, |y| \leq 1$ there is some $|z| \leq 10$ such that $(x,y,z) \in \phi(B)$;
\item $\partial\phi$ takes values in $\{(0,0,z) \in \H(\R) : |z| \leq 10/N\}$. 
\end{enumerate}
Must it be the case that $|B|/N^3 \rightarrow \infty$ as $N \rightarrow \infty$?
\end{question}


\begin{thebibliography}{99}
\bibitem{breuillard-green} E.~Breuillard and B.~Green, \emph{Approximate groups III: the unitary case},
Turkish J. Math. \textbf{36} (2012), no. 2, 199--215. 

\bibitem{bgt} E.~Breuillard, B.~Green and T.~Tao, \emph{The structure of approximate groups}, Publ. Math. Inst. Hautes \'Etudes Sci. \textbf{116} (2012), 115--221.

\bibitem{carolino} P.~K.~Carolino, \emph{The Structure of Locally Compact Approximate Groups,} Thesis (Ph.D.), University of California, Los Angeles. 2015. 98 pp.

\bibitem{epstein} D.B.A.~Epstein, \emph{Almost all subgroups of a Lie group are free,}
J. Algebra \textbf{19} (1971), 261--262. 


\bibitem{gowers-1} W.~T.~Gowers, \emph{A new proof of Szemer\'edi's theorem for arithmetic progressions of length four,} Geom. Funct. Anal. \textbf{8} (1998), no. 3, 529--551.

\bibitem{gowers-2} W.~T.~Gowers, \emph{A new proof of Szemer\'edi's theorem,} Geom. Funct. Anal. \textbf{11} (2001), no. 3, 465--588.

\bibitem{gowers-long} W.~T.~Gowers and J.~Long, \emph{Partial associativity and rough approximate groups,} preprint, \texttt{https://arxiv.org/abs/1904.08732}.

\bibitem{gkr} K.~Grove, H.~Karcher, and E.~Ruh, \emph{Jacobi fields and Finsler metrics on compact Lie groups with an application to differentiable pinching problems,}
Math. Ann. \textbf{211} (1974), 7--21. 
 
\bibitem{kazhdan} D.~Kazhdan, \emph{On $\eps$-representations}, Israel J. Math. \textbf{43} (1982), no. 4, 315--323.

\bibitem{sanders} T.~Sanders, \emph{On a nonabelian Balog-Szemer\'edi-type lemma,}
J. Aust. Math. Soc. \textbf{89} (2010), no. 1, 127--132. 

\bibitem{tao} T.~Tao, \emph{Product set estimates for non-commutative groups,} Combinatorica \textbf{28} (2008), no. 5, 547--594. 



\bibitem{turing} A.~M.~Turing, \emph{Finite approximations to Lie groups,} 
Ann. Math. (2) \textbf{39} (1938), no. 1, 105--111. 

\end{thebibliography}
\end{document}